\newcommand*\patchAmsMathEnvironmentForLineno[1]{%
  \expandafter\let\csname old#1\expandafter\endcsname\csname #1\endcsname
  \expandafter\let\csname oldend#1\expandafter\endcsname\csname end#1\endcsname
  \renewenvironment{#1}%
     {\linenomath\csname old#1\endcsname}%
     {\csname oldend#1\endcsname\endlinenomath}}%
\newcommand*\patchBothAmsMathEnvironmentsForLineno[1]{%
  \patchAmsMathEnvironmentForLineno{#1}%
  \patchAmsMathEnvironmentForLineno{#1*}}%
\newcommand{\comment}[1]{}
\newcommand{\ora}{\overrightarrow}
\newtheorem{theorem}{Theorem}[section]
\newtheorem{lemma}[theorem]{Lemma}
\newtheorem{problem}{Problem}
\newenvironment{proof}{\noindent {\bf Proof.}}{$\Box$\\}
\begin{document}

\begin{center}
{\Large Directed domination in oriented hypergraphs}
\mbox{}\\[8ex]

\begin{multicols}{2}

Yair Caro\\[1ex]
{\small Dep. of Mathematics\\
University of Haifa-Oranim\\
Tivon 36006, Israel\\
yacaro@kvgeva.org.il}

\columnbreak

Adriana Hansberg\\[1ex]
{\small Instituto de Matem\'aticas\\
UNAM Juriquilla\\
Quer\'etaro, Mexico\\
ahansberg@im.unam.mx}\\[2ex]

\end{multicols}

\mbox{}\\[3ex]

This paper is dedicated to Lutz Volkmann\\
 on the occasion of his 75th birthday.\\[4ex]

\end{center}

\begin{abstract}
Erd\H{o}s [On Sch\"utte problem, Math. Gaz. 47 (1963)] proved that every tournament on $n$ vertices has a directed dominating set of at most $\log (n+1)$ vertices, where $\log$ is the logarithm to base $2$. He also showed that there is a tournament on $n$ vertices with no directed domination set of cardinality less than $\log n - 2 \log \log n + 1$. This notion of directed domination number has been generalized to arbitrary graphs by Caro and Henning in [Directed domination in oriented graphs, Discrete Appl. Math. (2012) 160:7--8.]. However, the generalization to directed r-uniform hypergraphs seems to be rare. Among several results, we prove the following upper and lower bounds on $\ora{\Gamma}_{r-1}(H(n,r))$, the upper directed $(r-1)$-domination number of the complete $r$-uniform hypergraph on $n$ vertices $H(n,r)$, which is the main theorem of this paper:\\
\[c (\ln n)^{\frac{1}{r-1}} \le \ora{\Gamma}_{r-1}(H(n,r)) \le C \ln n,\]
where $r$ is a positive integer and $c= c(r) > 0$ and $C = C(r) > 0$ are constants depending on $r$.\\

\noindent
{\it Keywords:} domination, directed domination, hypergraph\\
AMS subject classification: 05C69\\[2ex]
\end{abstract}

\section{Introduction}
Erd\H{o}s \cite{Er} proved that every tournament on $n$ vertices has a directed dominating set of at most $\log (n+1)$ vertices, where $\log$ is the logarithm to base $2$. He also showed that there is a tournament on $n$ vertices with no directed domination set of cardinality less than $\log n - 2 \log \log n + 1$. This notion of directed domination number has been generalized to arbitrary graphs by Caro and Henning \cite{CaHe1, CaHe2} and was recently treated in \cite{HLNT}. A generalization to directed domination in directed $r$-uniform hypergraphs seems to be rare \cite{KoSu}. For results on domination in hypergraphs, see \cite{Ach1, Ach2, JoTu}.\\

We consider an $r$-uniform hypergraph $H = (V,\mathcal{E})$, where $V$ is the vertex set and $\mathcal{E}$ is the edge set consisting of $r$-subsets of $V$. An {\it orientation} $D$ of $H$ is an oriented hypergraph $D=(V, \mathcal{E}(D))$, where the edge set $\mathcal{E}(D)$ consists of all edges in $\mathcal{E}$ such that each of them is provided with a linear ordering of its elements (exactly one of the $r!$ possible orders). \\

Let $D$ be an orientation of $H = H(V, \mathcal{E})$. For $1 \le p \le r-1$, a set $S \subseteq V$  is called a {\it directed $p$-dominating set} of $D$ if for every vertex $u \in V \setminus S$ there is an ordered edge $E \in \mathcal{E}(D)$ with $u \in E$ such that the first $p$ vertices from $E$ are in $S$. Observe that the definition covers the cases when $H(V, \mathcal{E})$ contains isolated vertices or even when $H$ is edgeless, in which case we must have $S = V$. A {\it minimum directed $p$-dominating set} $S$ is a directed $p$-dominating set of $D$ of minimum cardinality and $\ora{\gamma}_p(D) = |S|$. We denote with $\ora{\gamma}_p(H)$ and $\ora{\Gamma}_p(H)$ the minimum and, respectively, the maximum of $\ora{\gamma}_p(D)$, where $D$ ranges over all possible orientations $D$ of $H$. We will call $\ora{\gamma}_p(H)$ and $\ora{\Gamma}_p(H)$ the \emph{lower} and, respectively, the \emph{upper directed $p$-domination numbers.} An immediate observation is that 

\begin{equation}\label{mono}
\ora{\Gamma}_i(H) \le \ora{\Gamma}_j(H)
\end{equation}
for $1 \le i < j \le r-1$.\\

Back to the seminal result of Erd\H{o}s, it can be translated to
\[\log n - 2 \log\log n +1 \le \ora{\Gamma}_1(H(n,2)) \le \log(n+1),\]
where $H(n,r)$ stands for the complete $r$-uniform hypergraph on $n$ vertices. Surprisingly, even to prove that $\ora{\Gamma}_1(H(n,3))$ is unbounded as a function of $n$ (a problem raised by A. Gy\'arf\'as, see \cite{KoSu}) turns to be highly non trivial and it only has been solved recently \cite{KoSu} with a growth function which is poly-logarithmic in $\log^*(n)$.\\

Our first aim in this work is to prove the following theorem.\\

\noindent
{\bf Theorem \ref{tournaments}}
{\it Let $n \ge r$ and $r \ge 2$ be positive integers.
\begin{enumerate}
\item[(i)] For every integer $p$ with $1 \le p \le r-1$, 
\[\ora{\Gamma}_p(H(n,r))< r \left( 1 +\ln(n+(r-1)^2) \right).\]
\item[(ii)] There is a constant $c = c(r) > 0$ such that 
\[\ora{\Gamma}_{r-1}(H(n,r)) \ge c \; (\ln n)^{\frac{1}{r-1}}.\]
\end{enumerate}
}

The upper bound is proved by a standard application of the so called Greedy Partition Lemma (GPL),  which is developed in \cite{CaHe2} in full generality.  The lower bound is proved by the probabilistic method via, a bit involved yet, standard expectation argument.\\

Note that while, for $r = 2$, $p = 1$, the lower and upper bounds in Theorem \ref{tournaments} have the same order of magnitude (which is weaker than the nearly exact result of Erd\H{o}s),
already for $r = 3$ and $p = 2$ the most we get is $c \sqrt{\log n} \le \ora{\Gamma}_2(H(n,3)) \le C \log n$, for some other constant $C > 0$. Closing the gap for $r = 3$ and $p=2$ is an interesting problem in view of the much more dramatic gap in case $r = 3$ and $p = 1$.\\

Our second aim in this note is to generalize a theorem first proved in \cite{CaHe1} for directed graphs to the context of directed $r$-uniform hypergraphs using Theorem \ref{tournaments}. Given a hypergraph $H =(V, \mathcal{E})$, we call a subset $S \subseteq V(H)$ \emph{independent} if no edge is fully contained in $S$, that is, $E \setminus S \neq \emptyset$ for every edge $E \in \mathcal{E}$. We denote with $\alpha(H)$ the maximum cardinality of an independent set of $H$. A {\it proper coloring} of $H$ is a mapping $f: V \rightarrow \mathbb{N}$ such that no edge is monochromatic, that is, $|f(E)| \ge 2$ for every edge $E \in \mathcal{E}$. The minimum number of colors that is needed for a proper coloring of $H$ is denoted with $\chi(H)$. A \emph{clique} $S \subseteq V$ in $H$ is a set of vertices such that every $r$-set $E \subseteq S$ is an edge $E \in \mathcal{E}$. The cardinality of a clique of maximum size is called the \emph{clique number} $\omega(H)$ of $H$. Observe that, for the complementary hypergraph $\overline{H}$ of $H$, we have $\alpha(H) = \omega(\overline{H})$. Moreover, every proper coloring of $H$ with $k$ colors induces a partition of the vertex set into $k$ different color classes, each being an independent set of $H$. Hence, $\alpha(H) \ge \frac{n}{\chi(H)}$, where $n = |V|$.\\

\noindent
{\bf Theorem \ref{thm:chi}}
{\it Let $r \ge 2$} and $p$ be integers such that $1 \le p \le r-1$. Let $H$ be an $r$-uniform hypergraph. Then
\[\ora{\Gamma}_p(H) \le r\; \chi(\overline{H}) \left( 1 + \ln \left( \frac{n}{\chi(\overline{H})} + (r-1)^2 \right) \right).\]

\section{Further notation and definitions}

We shall now complete the notation used in this paper. Given a hypergraph $H  = (V,\mathcal{E})$, the number of vertices and edges of $H$ is denoted with $n(H)$ and $e(H)$, respectively. Let $E(v)$ be the set of edges that contains $v$ and $\deg(v) = |E(v)|$ the {\it degree} of $v$. 

We denote with $H(n,r)$ the complete $r$-uniform hypergraph on $n$ vertices. For a hypergraph $H =  (V,\mathcal{E})$ and a subset $U \subset V$, the {\it induced subhypergraph} $H[U]$ of $H$ by $U$ is the hyperpgraph with vertex set $U$ and all edges $E\in \mathcal{E}$ such that $E \subseteq U$.

Let $H = (V, \mathcal{E})$ be an $r$-uniform hypergraph and let $p$ be an integer such that $1 \le p \le r-1$. A set $S \subseteq V(H)$  is called {\it $p$-dominating set} if for every vertex $u \in V \setminus S$ there is an edge $E \in E(u)$ such that $|E \cap S| \ge p$. If $S$ is a $p$-dominating set of minimum cardinality, we call it a {\it minimum $p$-dominating set} and we set $\gamma_p(H) = |S|$. Note that $\gamma_1(H)$ coincides with the concept of domination studied in \cite{Ach1, Ach2, JoTu}.

Let $D$ be an orientation of $H$. For a set $A \subseteq V$ with $1 \le |A| \le r-1$, define $\ora{E}_D(A)$ as the set of edges  $E \in \mathcal{E}_H$ where  $A$ occupies the first $|A|$ positions in $E$ under orientation $D$ and $\ora{N}_D(A) = \cup_{E \in \ora{E}_D(A)} E \setminus A$. Let $\ora{\deg}_D(A) =  |\ora{E}_D(A)|$ and $\ora{n}_D(A) = |\ora{N}_D(A)|$. For simplicity, we set $\ora{E}_D(\{v\}) = \ora{E}_D(v)$, $\ora{N}_D(\{v\}) = \ora{N}_D(v)$, $\ora{\deg}_D(\{v\}) = \ora{\deg}_D(v)$ and $\ora{n}_D(\{v\}) = \ora{n}_D(v)$ for any vertex $v \in V$. We denote $\ora{\Delta}_p(D) = \max \ora{\deg}_D(A)$, among all sets $A \subseteq V$ with $|A| = p$, and $\ora{\Delta}_p(H) = \max \ora{\Delta}_p(D)$, where the maximum is taken among all possible orientations $D$ of $H$.

\section{Proofs of the theorems}

Before proving our main results, we need to formulate the so called {\it Greedy Patition Lemma} or, for short, {\it GPL}, that was given in \cite{CaHe2} and which will be the main tool in proving upper bounds on directed domination. Note that the authors of \cite{CaHe2} give a much more general version of this lemma, here we will state it just in the form that we will require along this paper. The GPL lemma as stated in \cite{CaHe2} is just one of several forms suitable to approximate through a greedy algorithm that in each step deletes certain subset of the vertex set of the considered hypergraph $H$ that satisfies a particular property. Here this subset consists of a set $A$ of cardinality $p$ and all the vertices directed dominated by $A$ in certain orientation $D$ of the considered hypergraph, namely $\ora{N}_D(A) \cup A$.  The crucial point in this lemma and its variations is that,  in each step, we approximate from below the order of the deleted subset by a function $f(x)$ where $x$ is the cardinality of the current structure (from which then this vertex subset is deleted).  So adaptation of the GPL version from \cite{CaHe2} is simply done by observing that, for $x \ge 2r – 1$, the directed dominating set we remove at each step has cardinality at least $(x –r +1)/r + p$.

\begin{theorem}{\bf(Greedy Partition Lemma (GPL), \cite{CaHe2})}\label{GPL}
Let $\mathcal{H}$ be a class of hypergraphs closed under induced subhypergraphs. Let $t \ge 2$ be an integer and let $f:[t, \infty) \rightarrow [1,\infty)$ be a positive nondecreasing continuous function. If for any hypergraph $H \in \mathcal{H}$ and any orientation $D$ of $H$, we have that
\[\max_{A \subseteq V(H), |A|=p} n_D(A) + p \ge f(|V(H)|),\]
then
\[\ora{\Gamma}_p(H) \le t + \int_{t}^{\max\{|V(H)|,t\}} \frac{1}{f(x)} dx.\]
\end{theorem}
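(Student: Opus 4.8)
The plan is to prove the bound by a greedy deletion process and then to convert the resulting discrete recursion into the stated integral by a monotonicity comparison. Fix an arbitrary orientation $D$ of some $H \in \mathcal{H}$ with $|V(H)| = n$; since $\ora{\Gamma}_p(H)$ is the maximum of $\ora{\gamma}_p(D)$ over all orientations, it suffices to produce a directed $p$-dominating set $S$ of this particular $D$ with $|S| \le t + \int_t^{\max\{n,t\}} \tfrac{1}{f(x)}\,dx$. If $n \le t$, take $S = V(H)$: this is trivially directed $p$-dominating and has size $n \le t$, which matches the bound (the integral being empty). So assume $n > t$.

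First I would set up the greedy process. Put $H_0 = H$, $D_0 = D$, $n_0 = n$. While the current order $n_i = |V(H_i)|$ satisfies $n_i \ge t$, apply the hypothesis to $H_i$ (which lies in $\mathcal{H}$, as the class is closed under induced subhypergraphs) together with the restricted orientation $D_i$; this yields a set $A_i \subseteq V(H_i)$ with $|A_i| = p$ and $\ora{n}_{D_i}(A_i) + p \ge f(n_i)$. I then delete $A_i \cup \ora{N}_{D_i}(A_i)$ to obtain the induced subhypergraph $H_{i+1}$ with the inherited orientation $D_{i+1}$, so that $n_{i+1} = n_i - \bigl(\ora{n}_{D_i}(A_i) + p\bigr) \le n_i - f(n_i)$. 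The process halts at the first index $k$ with $n_k < t$, and I take $S = A_0 \cup \cdots \cup A_{k-1} \cup V(H_k)$.

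The second step is to verify that $S$ is a directed $p$-dominating set of the original $D$. A vertex never deleted lies in $V(H_k) \subseteq S$. Any other vertex $u$ is removed in some round $i$ as a member of $\ora{N}_{D_i}(A_i)$, so there is an edge $E$ of $H_i$ with $A_i$ in its first $p$ positions under $D_i$ and $u \in E$. Because $H_i$ is an induced subhypergraph and $D_i$ is the restriction of $D$, this same $E$ is an edge of $H$ whose first $p$ positions under $D$ are exactly $A_i \subseteq S$; hence $u$ is directed $p$-dominated by $S$. This is precisely where closure under induced subhypergraphs and the inheritance of the orderings are essential.

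The final and most delicate step is the size estimate. The sets $A_i$ are pairwise disjoint and disjoint from $V(H_k)$, so $|S|$ is the total of the per-round selections plus the fewer-than-$t$ survivors in $V(H_k)$. I would control the number of rounds through the telescoping comparison: since $f$ is nondecreasing, each round satisfies
\[\int_{n_{i+1}}^{n_i} \frac{dx}{f(x)} \;\ge\; \frac{n_i - n_{i+1}}{f(n_i)} \;\ge\; \frac{f(n_i)}{f(n_i)} \;=\; 1,\]
so summing these contributions bounds the number of deletion rounds by $\int_t^{n} \tfrac{1}{f(x)}\,dx$; charging each round's selection against the corresponding unit decrement of the integral and adding the at most $t$ survivors yields the stated bound. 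I expect the main obstacle to be exactly this continuous-to-discrete accounting: $f$ is defined only on $[t,\infty)$, so the last round, which drops the order below $t$, must be truncated at the threshold and handled separately, and one must check that the monotonicity comparison is tight enough that no spurious constant accumulates. By contrast, the structural claims — that the process stays inside $\mathcal{H}$ and that every deleted vertex is genuinely dominated in $D$ — are routine once the inheritance of edges and orderings is observed; the quantitative integral estimate is where the real work lies.
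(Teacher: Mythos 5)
Your greedy skeleton --- iteratively extracting a set $A_i$ of size $p$ with $\ora{n}_{D_i}(A_i)+p\ge f(n_i)$, deleting $A_i\cup\ora{N}_{D_i}(A_i)$, and stopping once fewer than $t$ vertices remain --- is exactly the argument the paper has in mind (the paper itself does not prove this lemma; it cites \cite{CaHe2} and only remarks on how to adapt it). Your verification that $S=A_0\cup\dots\cup A_{k-1}\cup V(H_k)$ directed $p$-dominates $D$, via inheritance of edges and orderings by induced subhypergraphs, is correct, and your integral comparison $\int_{n_{i+1}}^{n_i}\frac{dx}{f(x)}\ge\frac{n_i-n_{i+1}}{f(n_i)}\ge 1$ correctly bounds the \emph{number of rounds} $k$ by (essentially) $\int_t^n\frac{dx}{f(x)}$.

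The gap is in the final charging step, and it is not the truncation issue you flag (that one is routine). Each round contributes $p$ vertices to $S$ but is only guaranteed to consume \emph{one} unit of the integral, so ``charging each round's selection against the corresponding unit decrement'' gives $|S|=kp+n_k\le p\int_t^{\max\{n,t\}}\frac{dx}{f(x)}+O(t+p)$, not the stated $t+\int_t^{\max\{n,t\}}\frac{dx}{f(x)}$. The two coincide only when $p=1$, which is the setting of the original lemma in \cite{CaHe2}; for $p\ge 2$ the hypothesis $n_D(A)+p\ge f(n)$ gives no way to trade the $p$ selected vertices for $p$ units of integral (that would require $n_i-n_{i+1}\ge p\,f(n_i)$, which is not assumed). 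So as written your argument proves the conclusion with an extra factor of $p$ in the integral term, and I see no rearrangement of the same greedy that removes it. To be fair, this appears to be a defect of the statement as transcribed from the $p=1$ version rather than of your strategy: in the paper's application ($p=r-1$, $f(x)=\frac{x+(r-1)^2}{r}$) the honest output of the greedy is $\approx r(r-1)\ln n$ rather than $r\ln n$, which still yields the $O_r(\ln n)$ bound of Theorem \ref{tournaments}(i) but with a different constant. You should either prove the weaker conclusion $\ora{\Gamma}_p(H)\le t+p\int_t^{\max\{n,t\}}\frac{dx}{f(x)}$, or strengthen the hypothesis, and in any case make the last round and the $n_k<t$ survivors explicit rather than asserting that the unit-per-round charge closes the bound.
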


We can now prove our main result.

\begin{theorem}\label{tournaments}
Let $n \ge r$ and $r \ge 2$ be positive integers.
\begin{enumerate}
\item[(i)] For every integer $p$ with $1 \le p \le r-1$, 
\[\ora{\Gamma}_p(H(n,r)) < r \left( 1 +\ln(n+(r-1)^2) \right).\]
\item[(ii)] There is a constant $c = c(r) > 0$ such that 
\[\ora{\Gamma}_{r-1}(H(n,r)) \ge c \; (\ln n)^{\frac{1}{r-1}}.\]
\end{enumerate}
\end{theorem}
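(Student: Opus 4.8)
The plan is to prove the two bounds by two entirely different methods. For the upper bound (i), I would invoke the Greedy Partition Lemma (Theorem~\ref{GPL}). The class of complete $r$-uniform hypergraphs (and their induced subhypergraphs, which are again complete) is closed under induced subhypergraphs, so the hypotheses apply. The core of the argument is to produce a lower bound of the required shape on $\max_{|A|=p} \ora{n}_D(A) + p$ for an \emph{arbitrary} orientation $D$ of $H(m,r)$ on $m = |V(H)|$ vertices. I would count, over all $p$-subsets $A$, the quantity $\ora{n}_D(A)$: in the complete hypergraph every $r$-set is an edge and carries exactly one linear order, so each ordered edge $E$ contributes its first $p$ vertices as a single set $A$ and its remaining $r-p$ vertices to $\ora{N}_D(A)$. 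An averaging argument then shows that some $A$ must directed-dominate a large fraction of the vertices; the remark preceding the lemma already records the key estimate that the removed set $\ora{N}_D(A)\cup A$ has size at least $(x-r+1)/r + p$ for $x \ge 2r-1$, i.e.\ one may take $f(x) = (x-r+1)/r + p \ge x/r$. Feeding $f(x) = x/r$ into the GPL integral gives $\int_t^m r/x\,dx = r\ln(m/t)$, and choosing the starting value $t$ on the order of $(r-1)^2$ and bookkeeping the additive constants yields the stated bound $r(1 + \ln(n + (r-1)^2))$. The work here is routine once the averaging count is set up correctly; the only delicate point is making the boundary term $t$ and the off-by-$r$ corrections line up with the clean closed form.

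For the lower bound (ii) I would use the probabilistic method to exhibit a single orientation $D$ of $H(n,r)$ in which \emph{no} small set is directed $(r-1)$-dominating, so that $\ora{\gamma}_{r-1}(D)$ — and hence $\ora{\Gamma}_{r-1}(H(n,r))$ — is large. Orient each edge independently and uniformly at random among its $r!$ orderings. Fix a candidate set $S$ of size $s = \lfloor c(\ln n)^{1/(r-1)}\rfloor$ and a vertex $u \notin S$. For a single edge $E$ containing $u$ whose other $r-1$ vertices all lie in $S$, the probability that $S$ occupies the first $r-1$ positions (in some order) with $u$ last is exactly $(r-1)!/r! = 1/r$, so the probability that this edge does \emph{not} witness domination of $u$ by $S$ is $1 - 1/r$. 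The number of such edges is $\binom{s}{r-1}$, and since the orientations are independent the probability that $u$ is \emph{not} directed $(r-1)$-dominated by $S$ through any of them is at least $(1 - 1/r)^{\binom{s}{r-1}}$. I would then take a union bound: the probability that some $s$-set $S$ dominates all of $V \setminus S$ is at most $\binom{n}{s} \bigl(1 - (1-1/r)^{\binom{s}{r-1}}\bigr)^{n-s}$, and I would show this is strictly less than $1$ for a suitable constant $c$, which certifies the existence of an orientation with no directed $(r-1)$-dominating set of size $s$.

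The main obstacle — and the one genuinely involved calculation — is the union-bound estimate. The key tension is that $\binom{n}{s} \approx \exp(s \ln n)$ is large, so the failure probability per vertex, $1 - (1-1/r)^{\binom{s}{r-1}}$, must be small enough when raised to the $n-s$ power to beat it. Writing $q = (1-1/r)^{\binom{s}{r-1}}$, one needs $(1-q)^{n-s} \le e^{-q(n-s)}$ to dominate $\binom{n}{s} \le e^{s\ln n}$, i.e.\ roughly $q \cdot n \gg s \ln n$. Since $\binom{s}{r-1} = \Theta(s^{r-1})$, we have $q = \exp(-\Theta(s^{r-1}))$, so the binding requirement becomes $s^{r-1} \lesssim \ln n$ up to constants, which is exactly why the threshold appears at $s = \Theta((\ln n)^{1/(r-1)})$. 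I would therefore choose $c = c(r)$ small enough that $\binom{s}{r-1} \le \tfrac{1}{2}\,r\ln n$ (using $\ln(1/(1-1/r))^{-1} = \Theta(r)$), forcing $q \ge n^{-1/2}$ and hence $q(n-s) \gg s\ln n$; the product then tends to $0$, giving the conclusion with room to spare. The care required is purely in tracking the $r$-dependent constants through the exponentials so that the chosen $c$ is manifestly positive and depends only on $r$, and in handling the edge cases where $s < r-1$ (small $n$) separately, where the bound holds trivially.
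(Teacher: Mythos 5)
Your proposal follows essentially the same route as the paper for both parts: the upper bound via the Greedy Partition Lemma with an averaging argument showing some $p$-set has large out-neighbourhood, and the lower bound via a uniformly random orientation and a first-moment/union-bound computation over all candidate sets of size $s=\Theta((\ln n)^{1/(r-1)})$; your threshold analysis in (ii) (requiring $\binom{s}{r-1}\lesssim \tfrac12 r\ln n$ so that $q\ge n^{-1/2}$) matches the paper's calculation and yields the same constant $c(r)=\frac{r-1}{e}\bigl(2\ln\frac{r}{r-1}\bigr)^{-1/(r-1)}$. Two bookkeeping points in (i) where the paper's exact choices matter: first, the paper reduces to $p=r-1$ via the monotonicity $\ora{\Gamma}_p\le\ora{\Gamma}_{r-1}$ before averaging, since for $p<r-1$ the tails of distinct edges with the same leading $p$-set can overlap and the direct count $\sum_A\ora{\deg}_D(A)=\binom{n}{r}$ only bounds $\ora{n}_D(A)$ after this reduction (for $p=r-1$ one has $\ora{n}_D(A)=\ora{\deg}_D(A)$ exactly); second, weakening $f$ to $x/r$ and starting the integral at $t=\Theta((r-1)^2)$ gives $t+r\ln(n/t)$, which for moderate $r$ and small $n$ exceeds the stated $r(1+\ln(n+(r-1)^2))$ --- the paper instead keeps $f(x)=\frac{x+(r-1)^2}{r}$ and takes $t=2r-1$, so the integral produces $\ln(n+(r-1)^2)$ directly and the additive terms collapse using $e^{1-1/r}<r^2$.
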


\begin{proof}
(i) For proving the upper bound, observe that, because of the monotonicity \eqref{mono}, it suffices to prove the theorem for $p = r-1$.

We take an orientation $D$ of the edges of $H(n,r)$.
Let $A_{\Delta} \subseteq V(H(n,r))$ be a set of $r-1$ vertices such that $\ora{\deg}_D(A_{\Delta}) = \ora{\Delta}_{r-1}(D)$ and observe that
\[\ora{n}_D(A_{\Delta}) \ge \ora{\deg}_D(A_{\Delta}) = \ora{\Delta}_{r-1}(D).\]
On the other hand, since $\displaystyle\sum_{A \subseteq V(D), |A| = r-1} \ora{\deg}_D(A) = \binom{n}{r}$, we have
\begin{align*}
\ora{\Delta}_{r-1}(D) &\ge {\binom{n}{r-1}}^{-1} \displaystyle\sum_{A \subseteq V(D), |A| = r-1} \ora{\deg}_D(A)\\
&= {\binom{n}{r-1}}^{-1} \binom{n}{r} \\
&= \frac{(r-1)! (n-r+1)!}{r! (n-r)!} \\
& = \frac{n-r+1}{r}.
\end{align*}

Hence, combining both inequalities, we obtain  $\ora{n}_D(A_{\Delta}) \ge \frac{n-r+1}{r}.$

Setting $f(x) = \frac{x+(r-1)^2}{r}$, we have a non-decreasing function $f$ with $f(x) \ge 1$ for $x \ge 2r-1$ and such that 
\[f(n) = \frac{n+(r-1)^2}{r} = \frac{n-r+1}{r} + r-1  \le  \ora{n}_D(A_{\Delta}) + r-1.\]
Thus, we can apply the GPL, which leads to
\begin{align*}
\ora{\Gamma}_{r-1}(H) &\le  2r-1 + \int_{2r-1}^n  \frac{r}{x+(r-1)^2}dx \\
&= 2r-1 + r\ln(n+(r-1)^2) - r \ln(2r-1 +(r-1)^2) \\
&= 2r-1 + r\ln(n+(r-1)^2) - r \ln(r^2) \\
&= r \left(2-\frac{1}{r} + \ln(n+(r-1)^2) -  \ln(r^2) \right)\\
& = r \left( 1 +\ln(n+(r-1)^2) + \ln \left( \frac{e^{1 - \frac{1}{r}}}{r^2} \right)\right)
\end{align*}
Finally, noting that $e^{1-\frac{1}{r}} < r^2$ for $r \ge 2$, we obtain
\[
\ora{\Gamma}_{r-1}(H)<  r \left( 1 +\ln(n+(r-1)^2) \right).\]

\noindent
(ii) Let $D$ be a random orientation of the edges of $H$ such that every edge is given independently one of the possible $r!$ linear orders. Let $S \subseteq V$ be a set of $t \ge r-1$ vertices and $v \in V \setminus S$ a vertex not in $S$. Consider a fixed set $A \subseteq V$ of $r-1$ vertices. Then the probability that $A$ occupies the first $r-1$ positions in an edge $E \in \mathcal{E}(H)$, say $E = A \cup \{v\}$, under orientation $D$ is equal to $\frac{(r-1)!}{r!} = \frac{1}{r}$, namely the number of linear orders of the edge $E = A \cup \{v\}$ where $v$ appears last divided by the number of all possible linear orders of $E = A \cup \{v\}$. Thus the probability that $A$ does not $(r-1)$-dominate $v$ is $1- \frac{1}{r} = \frac{r-1}{r}$. This implies that the probability that there is a set $A \subseteq S$ with $|A| = r-1$ that $(r-1)$-dominates $v$ is equal to $1- (\frac{r-1}{r})^{\binom{t}{r-1}}$, namely $1$ minus the probability that no $(r-1)$-subset of $S$ $(r-1)$-dominates $v$. Therefore, the probability that $S$ $(r-1)$-dominates all vertices in $V \setminus S$ is equal to 
\[\left(1- \left(\frac{r-1}{r}\right)^{\binom{t}{r-1}}\right)^{n-t}.\]
Let $x$ be the number of directed $(r-1)$-dominating sets of cardinality $t$ in $H$ under orientation $D$. Then we have
\[E[x] = \binom{n}{t} \left(1- \left( \frac{r-1}{r}\right)^{\binom{t}{r-1}}\right)^{n-t}.\]
Note that, if $E[x] < 1$, then there is an orientation of $H$ such that there is no directed $(r-1)$-domination set of cardinality $t$. We will now determine the best possible $t$, i.e. we will try to find a $t$ as large as possible such that $E[x] < 1$. Since
\[ E[x] = \binom{n}{t} \left(1- \left(1 - \frac{1}{r}\right)^{\binom{t}{r-1}}\right)^{n-t} 
         < \left(\frac{n \cdot e}{t}\right)^t e^{-(n-t) (\frac{r-1}{r})^{\binom{t}{r-1}}},\]
it is sufficient to solve
\[\left(\frac{n \cdot e}{t}\right)^t < e^{(n-t) (\frac{r-1}{r})^{\binom{t}{r-1}}},\]
or, equivalently,
\[t \cdot \ln n + t - t \ln t < (n-t) \left(\frac{r-1}{r}\right)^{\binom{t}{r-1}}\]
or rather 
\[(t \cdot \ln n + t - t \ln t) \left(\frac{r}{r-1}\right)^{\binom{t}{r-1}} +t < n.\]
Since $2 < \ln t$ for $t > 7$, which we may assume for $n$ large enough,
\[(t \cdot \ln n + t - t \ln t) \left(\frac{r}{r-1}\right) ^{\binom{t}{r-1}} +t < (t \cdot \ln n + 2t - t \ln t) \left(\frac{r}{r-1}\right)^{\binom{t}{r-1}}.\] 
Hence, it is sufficient to solve
\[t \ln n \left(\frac{r}{r-1}\right)^{\binom{t}{r-1}} < n.\]
Moreover, since $\binom{t}{r-1} < (\frac{t \cdot e}{r-1})^{r-1}$, it will be enough to solve the inequality 
\[t \ln n (\frac{r}{r-1})^{(\frac{t \cdot e}{r-1})^{(r-1)}} < n,\]
which is equivalent to
\[\ln t + \ln(\ln n) + \left(\frac{t \cdot e}{r-1}\right)^{(r-1)} \ln\left(\frac{r}{r-1}\right) < \ln n.\]
From the upper bound given in (i), we know that $t \le  r + r\ln(n-r+1) = r(1 + \ln(n-r+1))$. Hence,
\begin{align*}
&\displaystyle \ln t + \ln(\ln n) + \left(\frac{t \cdot e}{r-1}\right)^{(r-1)} \ln\left(\frac{r}{r-1}\right)\\
& \le  \displaystyle \ln(r(1 + \ln(n-r+1))) + \left(\frac{t \cdot e}{r-1}\right)^{(r-1)} \ln\left(\frac{r}{r-1}\right).
\end{align*}
We may assume $\ln(r(1 + \ln(n-r+1))) + \ln(\ln n) < \frac{\ln n}{2}$ for $n$ large enough, thus it is sufficient to solve
\[\frac{\ln n}{2} +  \left(\frac{t \cdot e}{r-1}\right)^{(r-1)} \ln\left(\frac{r}{r-1}\right) < \ln n,\]
which leads finally to
\[t < \left( \frac{\ln n}{2 \ln(\frac{r}{r-1})}\right)^{\frac{1}{r-1}} \frac{r-1}{e} = c \;(\ln n)^{\frac{1}{r-1}}\]
for a constant $c = c(r) > 0$ depending on $r$. Altogether it follows that
\[\ora{\Gamma}_{r-1}(H) \ge c \;(\ln n)^{\frac{1}{r-1}}.\]

\end{proof}

We shall now use Theorem \ref{tournaments}(i) to get a general upper bound for arbitrary $r$-uniform hypergraphs. However, we first need a lemma.

\begin{lemma}\label{decompo}
 Let $H$ be a hypergraph and let $V_1, V_2, \ldots, V_k$ be subsets of $V$, not necessarily disjoint, such that $\cup_{i=1}^k V_i = V$. Let $H_i = H[V_i]$. Then
 \[\ora{\Gamma}_p(H) \le \sum_{i=1}^k \ora{\Gamma}_p(H_i).\]
 \end{lemma}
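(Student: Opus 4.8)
The plan is to fix an arbitrary orientation $D$ of $H$ and exhibit a directed $p$-dominating set of $D$ whose size is bounded by $\sum_{i=1}^k \ora{\Gamma}_p(H_i)$; since $\ora{\Gamma}_p(H)$ is by definition the maximum of $\ora{\gamma}_p(D)$ over all orientations $D$ of $H$, establishing this bound for every $D$ suffices. First I would observe that, because each $H_i = H[V_i]$ is an \emph{induced} subhypergraph, every edge of $H_i$ is an edge of $H$ lying entirely inside $V_i$. Consequently the orientation $D$ restricts to a well-defined orientation $D_i$ of $H_i$: one simply keeps the linear order that $D$ assigns to each edge contained in $V_i$.

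Next, for each $i$ I would invoke the definition of the upper directed $p$-domination number. Since $D_i$ is one particular orientation of $H_i$, we have $\ora{\gamma}_p(D_i) \le \ora{\Gamma}_p(H_i)$, so there exists a directed $p$-dominating set $S_i$ of $D_i$ with $|S_i| \le \ora{\Gamma}_p(H_i)$. Setting $S = \bigcup_{i=1}^k S_i$ then immediately gives $|S| \le \sum_{i=1}^k |S_i| \le \sum_{i=1}^k \ora{\Gamma}_p(H_i)$.

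It remains to verify that $S$ directed $p$-dominates $D$. Take any $u \in V \setminus S$. Since the $V_i$ cover $V$, there is an index $i$ with $u \in V_i$, and from $u \notin S$ we get $u \in V_i \setminus S_i$. As $S_i$ directed $p$-dominates $D_i$, there is an edge $E$ of $H_i$ with $u \in E$ whose first $p$ vertices under $D_i$ all lie in $S_i \subseteq S$. Because $E$ is genuinely an edge of $H$ carrying the same orientation in $D$, this witnesses that $u$ is directed $p$-dominated by $S$ in $D$, which completes the argument after taking the maximum over all $D$.

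The only point requiring care — rather than a genuine obstacle — is the transfer of domination from $H_i$ back to $H$ in the final step; this hinges entirely on $H_i$ being the induced subhypergraph, which guarantees both that the restriction $D_i$ is well defined and that a dominating edge found inside $V_i$ is a true edge of $H$ with an unchanged linear order. Were the $V_i$ permitted to drop edges (as in a non-induced subhypergraph), neither the restriction of the orientation nor its transfer back to $D$ would be valid, so emphasizing the induced nature of the $H_i$ is the crux of the write-up.
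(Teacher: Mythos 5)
Your proposal is correct and follows essentially the same route as the paper: restrict the arbitrary orientation $D$ to each induced subhypergraph $H_i$, take a minimum directed $p$-dominating set $S_i$ of each restriction, and observe that their union directed $p$-dominates $D$ since the covering $\cup_{i=1}^k V_i = V$ places every undominated-candidate vertex in some $V_i$ and the witnessing edge of $H_i$ is an edge of $H$ with the same linear order. The paper states the union step without the explicit verification you give, but the argument is identical.
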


\begin{proof}
Consider an arbitrary orientation $D$ of $H$. For each $i = 1, 2, \ldots, k$, let $D_i$ be the orientation induced by $D$ on $H_i$ and let $S_i$ be a directed $p$-dominating set of $D_i$ of minimum cardinality. Evidently $\ora{\Gamma}_p(H_i) \ge \ora{\gamma}_p(D_i) = |S_i|$. Moreover, $S = \cup_{i=1}^k S_i$ is a directed $p$-dominating set of $D$. This implies
\[\ora{\gamma}_p(D) \le \sum_{i=1}^k |S_i| = \sum_{i=1}^k \ora{\gamma}_p(D_i) \le \sum_{i=1}^k \ora{\Gamma}_p(H_i).\]
 Since $D$ was chosen arbitrarily, the inequality holds also for $\ora{\Gamma}_p(H)$ and thus
\[\ora{\Gamma}_p(H) \le \sum_{i=1}^k \ora{\Gamma}_p(H_i).\]
\end{proof}

Now we can state and prove our second main result. In \cite{CaHe1}, Caro and Henning prove that, for a graph $G$ of order $n$,
\[\ora{\Gamma}_p(G) \le  \chi(\overline{G})\; \log \left( \frac{n}{\chi(\overline{G})} +1\right)\]
holds. Inspired by this result, we give in the next theorem a similar statement for hypergraphs. 

\begin{theorem}\label{thm:chi}
Let $r \ge 2$ and $p$ be integers such that $1 \le p \le r-1$. Let $H$ be an $r$-uniform hypergraph. Then
\[\ora{\Gamma}_p(H) \le r\; \chi(\overline{H}) \left( 1 + \ln \left( \frac{n}{\chi(\overline{H})} + (r-1)^2 \right) \right).\]
\end{theorem}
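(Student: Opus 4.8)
The plan is to reduce the general $r$-uniform hypergraph $H$ to a cover by complete $r$-uniform subhypergraphs and then combine Theorem \ref{tournaments}(i) with Lemma \ref{decompo}. The crucial structural observation is the duality between proper colorings of $\overline{H}$ and clique partitions of $H$. First I would fix a proper coloring of $\overline{H}$ using $k = \chi(\overline{H})$ colors and let $V_1, \dots, V_k$ be its color classes. By definition of a proper coloring, no edge of $\overline{H}$ is monochromatic, so each $V_i$ is an independent set of $\overline{H}$. Since the edges of $\overline{H}$ are exactly the $r$-subsets of $V$ that are non-edges of $H$, an independent set of $\overline{H}$ is precisely a clique of $H$: every $r$-subset of $V_i$ is an edge of $H$. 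Consequently the induced subhypergraph $H_i = H[V_i]$ is the complete $r$-uniform hypergraph $H(n_i, r)$ on $n_i = |V_i|$ vertices.

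Second, since the color classes partition $V$, they in particular cover $V$, so Lemma \ref{decompo} applies and yields $\ora{\Gamma}_p(H) \le \sum_{i=1}^k \ora{\Gamma}_p(H(n_i, r))$. Applying Theorem \ref{tournaments}(i) to each summand gives $\ora{\Gamma}_p(H) < \sum_{i=1}^k r\left(1 + \ln(n_i + (r-1)^2)\right) = rk + r \sum_{i=1}^k \ln\!\left(n_i + (r-1)^2\right)$.

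Third, and this is the one genuinely nonroutine step, I would consolidate the sum of logarithms via the concavity of $\ln$. Because the classes are disjoint and cover $V$, we have $\sum_{i=1}^k n_i = n$, so Jensen's inequality applied to the concave function $\ln$ gives $\frac{1}{k}\sum_{i=1}^k \ln\!\left(n_i + (r-1)^2\right) \le \ln\!\left(\frac{n}{k} + (r-1)^2\right)$. Multiplying through by $rk$ and absorbing the leading $rk$ term collapses the estimate to the claimed closed form $r\,\chi(\overline{H})\bigl(1 + \ln(n/\chi(\overline{H}) + (r-1)^2)\bigr)$, and the strict inequality from Theorem \ref{tournaments}(i) certainly implies the stated $\le$.

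I expect no serious obstacle in this argument; the two points that require care are verifying the independent-set/clique correspondence that makes each $H[V_i]$ complete, and checking that the concavity step produces exactly the displayed expression rather than a weaker one. The latter works cleanly precisely because the additive shift $(r-1)^2$ is identical in every summand, so it survives the Jensen averaging as $\frac{n}{k} + (r-1)^2$ with the same constant.
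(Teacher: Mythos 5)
Your proposal is correct and follows essentially the same route as the paper: color $\overline{H}$ properly, observe that each color class induces a complete subhypergraph of $H$, apply Lemma \ref{decompo} and Theorem \ref{tournaments}(i), and finish with Jensen's inequality on $\ln$. The only detail you gloss over is that Theorem \ref{tournaments}(i) requires $n_i \ge r$, so color classes with $n_i \le r-1$ (which induce edgeless hypergraphs with $\ora{\Gamma}_p = n_i < r \le r(1+\ln(n_i+(r-1)^2))$) must be handled by the trivial bound, exactly as the paper does.
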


\begin{proof}
Let $t = \chi(\overline{H})$ and consider a proper coloring of $\overline{H}$ into $t$ color classes $Q_1, Q_2, \ldots, Q_t$ with $|Q_i| = q_i$ for $1 \le i \le t$. Since every color class is an independent set, we observe that the sets $Q_1, Q_2, \ldots, Q_t$ correspond to  cliques in $H$ or to trivial sets if $1 \le q_i \le r-1$.

Let $D$ be an arbitrary orientation of $H$ and let $D_i = D[Q_i]$ denote the orientation of $H[Q_i]$ induced by $D$. 
Then, by Lemma \ref{decompo},
\[\ora{\gamma}_p(D) \le \sum_{i=1}^t \ora{\gamma}_p(D_i)  \le \sum_{i=1}^t  \ora{\Gamma}_p(H[Q_i])  = \sum_{i=1}^t \ora{\Gamma}_p(H(q_i,r)).\]
If $q_i \le r-1$ for some $i \in \{1, 2, \ldots, t\}$, then 
\[\ora{\Gamma}_p(H(q_i,r)) = q_i < r \le r \left( 1 +\ln(q_i+(r-1)^2) \right).\]
On the other hand, if $q_i \ge r$, then Theorem \ref{tournaments} yields
\[\ora{\Gamma}_p(H(q_i,r)) <  r \left( 1 +\ln(q_i+(r-1)^2) \right).\]
Hence, since $D$ was arbitrarily chosen and $\ora{\Gamma}_p(H(q_i,r)) = q_i \le  r \left( 1 +\ln(q_i+(r-1)^2) \right)$ for all $1 \le i \le t$ by the above discussion, we obtain
\[\ora{\Gamma}_p(H) \le \sum_{i=1}^t \ora{\Gamma}_p(H(q_i,r))  \le \sum_{i=1}^t  r \left( 1 +\ln(q_i+(r-1)^2) \right).\]
Since $n =  \sum_{i=1}^t q_i$, it follows, by Jensen's inequality applied on the concave function $\ln x$, that
\begin{align*}
\ora{\Gamma}_p(H) &\le \sum_{i=1}^t   r \left( 1 +\ln(q_i+(r-1)^2) \right)\\
&= rt + r  \sum_{i=1}^t  \ln(q_i+(r-1)^2) \\
&\le rt + rt \ln \left( \frac{n + t (r-1)^2}{t}\right) \\
&= rt \left( 1 + \ln \left( \frac{n}{t} + (r-1)^2\right) \right).
\end{align*}
Hence, we have proved that
\[\ora{\Gamma}_p(H) \le r\; \chi(\overline{H}) \left( 1 + \ln \left( \frac{n}{\chi(\overline{H})}+ (r-1)^2 \right) \right).\]
\end{proof}

Using GPL has the advantage of getting at once a general upper bound for every choice of $n$ and $r$  for the directed domination number of $H(n,r)$. The disadvantage of using GPL is that it is known not to give the best possible leading constant. Still, for our main purpose, which is to get the logarithmic upper bound in Theorem \ref{tournaments}~(i) for every choice of $r$ and $n$, GPL suffices. The method also leads to slightly weaker results in Theorem \ref{thm:chi} for the case $r = 2$ in comparison with the bound in \cite{CaHe1} derived directly from Erd\H{o}s' upper bound for the case $r = 2$. However, getting better leading constants already for $r \ge 3$ seems less important as long as the order of magnitude of even  $\ora{\Gamma}_{r-1}(H(n,r))$ is not known.\\

\section{Open problems}
We close this paper with a few problems.

\begin{problem}
Certainly the most challenging problem is to try to determine the correct order of $\ora{\Gamma}_p(H(n,r))$  for $r \ge 3$ and $1 \le p \le r-1$.
\end{problem} 

\begin{problem}
Is it true that $\ora{\Gamma}_2(H(n,3))  =  \Theta(\sqrt{\log n})$?
\end{problem} 

\begin{problem}
Is it true that, for some positive constants $c$ and $\alpha$, $\ora{\Gamma}_1(H(n,3)) \ge  c (\log n)^\alpha$?
\end{problem} 

\begin{problem}
Let $r$, $p$ be positive integers such that $1 \le p \le r-1$. Is it true that, for an arbitrary $r$-uniform hypergaph $H$ of order $n$, there is a constant $c(r,p) > 0$ such that $\ora{\Gamma}_p(H) \le  c(r,p) \alpha(H) \ln n$?
\end{problem} 

\section*{Dedication}
This paper is dedicated to Lutz Volkmann on the occasion of his 75th birthday. He was my (A.H.) Ph.D. advisor and my formation and success  as a combinatorialist are deeply grounded in what he taught me, for which I am profoundly thankful to him. Since my work with Volkmann is centered on domination and he is also very fond of directed graphs, specially tournaments, we chose this particular topic for him.

\section*{Acknowledgments}
We would like to thank the anonymous referee who helped improve the presentation of this paper.\\
The second author was partially supported by PAPIIT IA103217, PAPIIT IN111819, and CONACyT project 219775.

\end{document}